\documentclass[a4paper,10pt]{amsart}
\usepackage{amsthm,amsmath,amsfonts,amssymb}
\usepackage{bm}
\usepackage{mathtools,mathrsfs}
\usepackage{todonotes}
\usepackage{tikz}
\usepackage[pdfdisplaydoctitle,colorlinks,breaklinks,urlcolor=blue,linkcolor=blue,citecolor=blue]{hyperref} 
\usepackage[nameinlink,capitalise]{cleveref}

\hypersetup{
  pdftitle   = {Fluctuations of Point Vortex Ensembles at Small Negative Temperature},
  pdfauthor  = {Francesco Grotto},
  pdfsubject = {Gaussian fluctuations of canonical Gibbs point vortex ensembles},
  pdfkeywords = {
    point vortices,
    negative temperature,
    cluster expansion,
    2D Euler equations,
    equilibrium fluctuations
  }
}

\AddToHook{env/theorem/begin}
  {\crefalias{section}{theorem}}

\AddToHook{env/lemma/begin}
  {\crefalias{theorem}{lemma}}

\AddToHook{env/proposition/begin}
  {\crefalias{theorem}{proposition}}

\AddToHook{env/corollary/begin}
  {\crefalias{theorem}{corollary}}

\AddToHook{env/definition/begin}
  {\crefalias{theorem}{definition}}

\newcommand{\D}{\mathcal{D}}

\newcommand{\E}{\mathcal{E}}

\newcommand{\e}{\mathrm e}
\newcommand{\F}{\mathcal{F}}

\newcommand{\K}{\mathcal{K}}

\newcommand{\U}{\mathcal{U}}
\newcommand{\T}{\mathbb{T}}

\DeclareMathOperator{\trace}{Tr}

\DeclareMathOperator{\ipi}{1PI}

\newcommand{\dettwo}{\det\nolimits_2}

\renewcommand{\epsilon}{\varepsilon}

\newcommand{\one}{\bm{1}}

\newcommand{\set}[1]{\left\{#1\right\}}
\newcommand{\pa}[1]{\left(#1\right)}

\newcommand{\abs}[1]{\left|#1\right|}
\newcommand{\norm}[1]{\left\|#1\right\|}
\newcommand{\brak}[1]{\left\langle#1\right\rangle}

\newcommand{\wick}[1]{:\mathrel{#1}:}

\newcommand{\ssum}{\mathop{\textstyle\sum}\nolimits}

\tikzset{
  smallgraph/.style={
    scale=0.35,
    baseline=(current bounding box.center),
    every node/.style={circle, fill, inner sep=1.15pt},
    every path/.style={line width=0.45pt}
  }
}

\newcommand{\ingraph}[1]{%
  \mathord{\vcenter{\hbox{#1}}}%
}

\newcommand{\Vof}[1]{A_N\!\left(#1\right)}

\newcommand{\gvee}{%
  \ingraph{%
    \begin{tikzpicture}[smallgraph]
      \node (a) at (0,0) {};
      \node (b) at (1.2,0) {};
      \node (c) at (0.6,0.9) {};
      \draw (a)--(c)--(b);
    \end{tikzpicture}%
  }%
}

\newcommand{\gtriangle}{%
  \ingraph{%
    \begin{tikzpicture}[smallgraph]
      \node (a) at (0,0) {};
      \node (b) at (1.2,0) {};
      \node (c) at (0.6,0.9) {};
      \draw (a)--(b)--(c)--(a);
    \end{tikzpicture}%
  }%
}

\newcommand{\gsquare}{%
  \ingraph{%
    \begin{tikzpicture}[smallgraph]
      \node (a) at (0,0) {};
      \node (b) at (1,0) {};
      \node (c) at (1,1) {};
      \node (d) at (0,1) {};
      \draw (a)--(b)--(c)--(d)--(a);
    \end{tikzpicture}%
  }%
}

\newcommand{\gbarsquare}{%
  \ingraph{%
    \begin{tikzpicture}[smallgraph]
      \node (a) at (0,0) {};
      \node (b) at (1,0) {};
      \node (c) at (1,1) {};
      \node (d) at (0,1) {};
      \draw (a)--(b)--(c)--(d)--(a);
      \draw (a)--(c);
    \end{tikzpicture}%
  }%
}

\newcommand{\gcompletesquare}{%
  \ingraph{%
    \begin{tikzpicture}[smallgraph]
      \node (a) at (0,0) {};
      \node (b) at (1,0) {};
      \node (c) at (1,1) {};
      \node (d) at (0,1) {};
      \draw (a)--(b)--(c)--(d)--(a);
      \draw (a)--(c);
      \draw (b)--(d);
    \end{tikzpicture}%
  }%
}

\newcommand{\gncycle}{%
  \ingraph{%
    \begin{tikzpicture}[smallgraph]
      \def\r{0.7}
      \def\rdots{1}

      \node[label=left:{\tiny $1$}]  (v1) at (150:\r) {};
      \node[label=above:{\tiny $2$}] (v2) at (90:\r) {};
      \node[label=right:{\tiny $3$}] (v3) at (30:\r) {};
      \node[label=left:{\tiny $k$}]  (vn) at (210:\r) {};

      \draw (v1) arc[start angle=150,end angle=90,radius=\r];
      \draw (v2) arc[start angle=90,end angle=30,radius=\r];
      \draw (v3) arc[start angle=30,end angle=-150,radius=\r];
      \draw (vn) arc[start angle=210,end angle=150,radius=\r];

      \node[inner sep=0.42pt] at (-28:\rdots) {};
      \node[inner sep=0.42pt] at (-45:\rdots) {};
      \node[inner sep=0.42pt] at (-62:\rdots) {};
    \end{tikzpicture}%
  }%
}

\newtheorem{theorem}{Theorem}

\newtheorem{lemma}[theorem]{Lemma}
\newtheorem{proposition}[theorem]{Proposition}

\theoremstyle{remark}

\numberwithin{equation}{section}

\newenvironment{acknowledgements}{%
  
  \begin{abstract}
}{%
  \end{abstract}
}

\title[Negative Temperature Point Vortex]{Fluctuations of Point Vortex Ensembles\\ at Small Negative Inverse Temperature}
\author[F. Grotto]{Francesco Grotto}
\address{Università di Pisa, Dipartimento di Matematica, 5 Largo Bruno Pontecorvo, 56127 Pisa, Italia.}
\email{francesco.grotto at unipi.it}

\keywords{2D incompressible fluid, equilibrium fluctuations, point vortex, negative inverse temperature, cluster expansion}
\date\today

\begin{document}

\begin{abstract}
    The vorticity distribution associated to canonical Gibbs point vortex ensembles under mean field scaling has Gaussian fluctuations for small negative inverse temperature. The perturbative argument is based on a cluster expansion of the partition function.
\end{abstract}

\maketitle

\section{Negative Temperature Point Vortex Ensembles}\label{sec:introduction}

Let $\T\simeq [0,1]^2$ be the flat 2D torus and $G$ the Laplacian Green function $-\Delta G=\delta_0-1$ under the zero average condition $\int_{\T}G(x)\,d x=0$. The point vortex dynamics on $\T$ is a system of $N$ singular ODEs,
\begin{equation*}
    \dot x_i=\sum_{j\neq i} (-\partial_2,\partial_1)G(x_i-x_j),\qquad i=1,\dots, N,
\end{equation*}
such that their (centered) empirical measure $\omega=\sum^N \delta_{x_j}-Ndx$ is a measure-valued weak solution of 2D Euler equations for an inviscid incompressible fluid with vorticity $\omega=\nabla\times u$. It is a Hamiltonian system in the coordinates of points $(x_{i,1},x_{i,2})$, with Hamiltonian function
\begin{equation*}
    H_N(x_1,\dots,x_N)=\sum_{1\leq i<j\leq N}G(x_i-x_j),
\end{equation*}
The statistical mechanics of point vortices is a classical heuristic proposed by Onsager \cite{onsager1949} for explaining the formation of coherent structures in 2D fluids. Onsager observed that for microcanonical distribution $\delta(H_N=E)$, finiteness of phase space volume forces entropy to decrease for large enough energy $E$.
As a consequence, clusters of vortices having intensities of the same sign become statistically predominant for large $E$. In the canonical ensemble, the same phenomenon is reproduced by negative values of the inverse temperature. The physical relevance of this fact is not limited to fluid dynamics \cite{purcell1951,abraham2017}.
As already observed by Onsager the question is then how to relate the statistical mechanics of point vortices with 2D Euler equations. Mean field scaling limits are a standard answer and the object of extensive literature. 

In this note I shall focus on the (mean field rescaled) canonical Gibbs ensembles of a single species of vortices on $\T$ at negative inverse temperature,
\begin{equation*}
    d\nu_{N,\beta}= \frac1{Z_N(\beta)}e^{\frac\beta{N}H_N(x_1,\dots,x_N)}dx^N,\quad
    Z_N(\beta)= \int_{\T^N}e^{\frac\beta{N}H_N}dx^N,
\end{equation*}
which is well-defined ($Z_N(\beta)<\infty$) for $\beta< 8\pi$. Notice that the usual inverse temperature is $-\beta$ in this notation, which I employ since I will only consider negative inverse temperature. It is known \cite{cagliotiI,cagliotiII,lionsbook} that for $\beta<8\pi$,
\begin{equation}\label{eq:mfasymp}
    \lim_{N\to \infty} \frac1N \log Z_N(\beta)=\sup
      \left\{
      \frac\beta2\int_{\T}\rho\,(G\ast\rho)\,dx
      -\int_{\T}\rho\log\rho\,dx \left| \rho\geq0,\, \int_{\T}\rho\,dx=1
      \right.\right\},
\end{equation}
where the maximizer satisfies the mean field equation
\begin{equation*}
    \rho(x)
    =\frac{\exp\pa{\beta(G\ast\rho)(x)}}
    {\int_{\T}\exp\pa{\beta(G\ast\rho)(y)}\,dy}.
\end{equation*}
For more general geometric domains and vortex circulations $\omega=\sum_i\gamma_i\delta_{x_i}$, solutions of the associated mean field equation can behave in various ways, but in the specific one-species case on the torus I am considering the unique solution is the flat profile $\rho=1$ for all $\beta<8\pi$ \cite{gumora,gugugu}, that is $\log Z_N(\beta)=o(N)$ as $N\to \infty$.
This means that the canonical and microcanonical ensembles are only equivalent at null energy $E=0$ (\emph{cf.} \cite{eyinkspohn,kiesslinglebowitz}), and apparently the canonical ensemble does not witness vortex clustering. I will argue that this is not the case: Gaussian fluctuations around the (trivial) average distribution correspond to energy-enstrophy equilibrium ensembles of the 2D Euler equations exhibiting concentration for negative inverse temperature.
The latter fact is well understood for positive inverse temperatures, the aim of this note is to prove it for small negative inverse temperatures by means of a perturbative argument.

The idea is best understood focusing on the distribution of $H_N$, which can be regarded as a U-statistic of vortex positions. The normalized empirical measure of vortices
\begin{equation*}
    \omega_N=\frac1{\sqrt{N}}\pa{\ssum^N_{j=1} \delta_{x_j}-Ndx}
\end{equation*}
converges in law on $\D'(\T)$ to the zero-averaged space white noise $\xi$ on $\T$, that is $\xi$ is the centered Gaussian process on $L^2_0(\T)$ with $E[\xi(\phi)\xi(\psi)]=\int \phi\psi dx$. A classical limit theorem for U-statistics then implies that $H_N/N$ converges to the random variable $\frac12:\brak{\xi,G\ast\xi}:$ in the second Wiener chaos of $\xi$ \cite{dynkin}.
In the terms of fluid dynamics, recall that the 2D Euler equations in vorticity form read
\begin{equation*}
    \partial_t \omega+u\cdot \nabla\omega=0,\quad \nabla\cdot u=0,
\end{equation*}
and there are two quadratic first integrals: energy and enstrophy,
\begin{equation*}
    E=\frac12 \int_\T |u(x)|^2 dx=\frac12 \int_\T \omega(x)(G\ast \omega)(x)dx,\qquad \E=\frac12 \int_\T \omega(x)^2dx.
\end{equation*}
White noise $\xi$ is (a multiple of) the \emph{enstrophy ensemble} formally defined by $\frac{1}{Z}e^{-\E(\omega)}d\omega$, but one can consider more generally the Gaussian measure
\begin{equation*}
    d\mu_{\beta}(\omega)=\frac1Z e^{\beta E(\omega)-\E(\omega)}d\omega
\end{equation*}
usually called the \emph{energy-enstrophy ensemble}, as an invariant measure of 2D Euler equations \cite{albeverioribeiro,albeveriocruzeiro}. The measure $\mu_\beta$ is absolutely continuous with respect to white noise, because $:E(\omega):$ is a well-defined random variable under the Gaussian measure $\mu_0$ as a Wick product. In fact, the law of $:E(\omega):$ under $\mu_0$ is exactly the limit in law of $H_N/N$ (see \cite{grottoromito}). Moreover, $:E:$ is exponentially integrable,
\begin{equation*}
    Z_\beta=\int e^{\beta :E:(\omega)}d\mu_0(\omega)=\dettwo(I-\beta G)^{-1/2}<\infty,\quad \beta<4\pi^2.
\end{equation*}
and $Z_\beta^{-1}e^{\beta :E:}=\frac{d\mu_\beta}{d\mu_0}$ in that range (given by the spectral gap of the Laplacian on $\T$). Here and below I shall consistently identify kernels and their associated integral operators on $L_0^2(\T)$, powers and iterated compositions are distinguished writing $G^n$ and $G^{\circ n}$.
Notice that $G$ is not trace-class, and $\dettwo$ is the Carleman-Fredholm determinant, related to the Fredholm determinant by
\begin{equation}\label{eq:fredholm}
    \dettwo(I-\beta G)=\det\bigl((I-\beta G)\e^{\beta G}\bigr)=
    \exp\pa{-\ssum_{k\ge2}\tfrac{\beta^k}{k}\trace(G^{\circ k})}.
\end{equation}

As a consequence, the law of $\omega_N$ under the canonical ensemble distribution $\nu_{N,\beta'}$ for positions $x_j$ converges in law on $\D'(\T)$ as $N\to \infty$ to samples of $\mu_{\beta'}$ provided that the following uniform integrability condition is satisfied:
\begin{equation}\label{eq:ui}
    \sup_{N\geq 2} Z_N(\beta)<\infty.
\end{equation}
for some $\beta>\beta'$. For $\beta\leq 0$ (positive inverse temperature) \eqref{eq:ui} was established in \cite{grottoromito,grottoromitosd,grottoluongoromito}, and it is a considerable improvement over the asymptotic \eqref{eq:mfasymp} obtained by variational methods.
This note is devoted to the proof of:

\begin{theorem}\label{thm:main}
There exists $0<\beta_0<8\pi$ such that, for every $0<\beta<\beta_0$,
\begin{equation*}
    \lim_{N\to\infty} Z_N(\beta)=\dettwo(I-\beta G)^{-1/2}.
\end{equation*}
As a consequence, the law of $\omega_N$ on $\D'(\T)$ under the positions' distribution $\nu_{N,\beta}(dx^N)$ weakly converges to $\mu_\beta$ for $\beta<\beta_0$.
\end{theorem}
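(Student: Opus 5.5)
The plan is a Mayer-type cluster expansion of $Z_N(\beta)$ in which the bonds are the mean-field pair interactions. I would write
\begin{equation*}
    e^{\frac\beta N H_N}=\prod_{i<j}\bigl(1+f_{ij}\bigr),\qquad f_{ij}=e^{\frac\beta N G(x_i-x_j)}-1.
\end{equation*}
Expanding the product gives a sum over graphs $\Gamma$ on $\{1,\dots,N\}$ of $\int\prod_{(ij)\in\Gamma}f_{ij}\,dx^N$. It is more convenient to expand only to first order in $\beta/N$ and keep a remainder. So I would set $f_{ij}=\frac\beta N G_{ij}+r_{ij}$ with $|r_{ij}|\lesssim \frac{\beta^2}{N^2}G_{ij}^2e^{\frac\beta N|G_{ij}|}$.

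The key cancellation is that $\int G=0$. Hence any vertex of degree one in a graph built from $G$-bonds integrates to zero, and only graphs with minimum degree $\ge2$ survive. Among graphs with $k$ vertices and $k$ edges, the only survivors are disjoint unions of cycles. A $k$-cycle contributes $(\beta/N)^k\trace(G^{\circ k})$, and there are $\binom Nk\frac{(k-1)!}{2}\approx \frac{N^k}{2k}$ ways to place one. Exponentiating over unions of cycles formally gives
\begin{equation*}
    \exp\Bigl(\ssum_{k\ge2}\tfrac{\beta^k}{2k}\trace(G^{\circ k})\Bigr)=\dettwo(I-\beta G)^{-1/2}
\end{equation*}
by \eqref{eq:fredholm}. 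There is no $k=1$ term since $G$ has no loops. The 2-cycle is a double edge, which I would treat through the $G^2$ part of $f$, and it gives $\frac{\beta^2}{4}\trace(G^{\circ2})$.

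The steps, in order, are as follows.
\begin{enumerate}
    \item Pass to the polymer representation: $Z_N$ is a sum over collections of vertex-disjoint connected polymers $\gamma\subset\{1,\dots,N\}$, with activity $w(\gamma)=\int\sum_{\Gamma\text{ conn. on }\gamma}\prod f_{ij}$.
    \item Prove an activity bound of the form $|w(\gamma)|\le (C\beta)^{|\gamma|}N^{-|\gamma|}\,c_{|\gamma|}$. Here $c_k$ should be of order $(k-1)!$ for cycle-like polymers. For polymers with excess (more edges than vertices) there should be an extra factor $N^{-1}$.
    \item Apply the Kotecký--Preiss criterion. Because $\sum_{|\gamma|=k,\gamma\ni 1}\approx N^{k-1}/(k-1)!$, this yields convergence of $\log Z_N=\sum_{\text{clusters}}\phi^T\prod w$ uniformly in $N$ for $\beta<\beta_0$.
    \item Take the limit term by term by dominated convergence. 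Single-cycle polymers converge to $\frac{\beta^k}{2k}\trace(G^{\circ k})$. Everything else, namely polymers with excess and clusters with overlapping polymers, is $O(N^{-1})$ with summable bounds.
\end{enumerate}

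The convergence in law to $\mu_\beta$ then follows from the argument recalled before the statement. The bound $\sup_N Z_N(\beta)<\infty$ for some $\beta'\in(\beta,\beta_0)$ gives the uniform integrability \eqref{eq:ui}. Combined with convergence of $H_N/N$ to $:E:$ jointly with $\omega_N\to\xi$, it identifies the limit density as $Z_\beta^{-1}e^{\beta:E:}$. Alternatively, one can test against $e^{i\omega_N(\phi)}$ with the same expansion, using a modified one-body weight.

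The main obstacle is Step 2. The sum over all connected graphs on $k$ vertices is huge, so one must not expand the full Mayer product. Instead I would use the Penrose tree-graph identity, or the Brydges--Battle--Federbush inequality, to reduce to a sum over spanning trees with the remaining factors bounded. The difficulty is that these bounds need stability, and here $G$ is only bounded below. The saving fact is that $\frac\beta N H_N$ is bounded above by something controlled, for instance via $\beta<8\pi$ and Moser--Trudinger/log-HLS, as already behind $Z_N<\infty$.

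The second difficulty is that tree bounds give factors $\frac\beta N\|G\|_{L^1}$ per edge. These factors alone do not exhibit the $\int G=0$ cancellation needed to kill leaves. My first attempt would be to keep leaves explicitly: integrate out degree-one vertices exactly, which gives zero for the linear part, so only $r_{ij}=O(N^{-2})$ survives. Iterating this prunes trees down to their 2-core, and the 2-core is a cycle plus excess. Integrability of $G^2$ and of $e^{\epsilon|G|}$ on $\T$ should suffice for the resulting convolution bounds, at least via $\|G\|_{L^p}$ estimates and Young's inequality.
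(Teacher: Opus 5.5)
\textbf{There is a genuine gap in Step 2.} Your architecture matches the paper's: Mayer expansion, the cancellation $\int G=0$ killing legs, cycles producing $\frac{\beta^k}{2k}\trace(G^{\circ k})$, dominated convergence, and Vitali for the second claim. But everything rests on Step 2, and the route you propose for it does not give the bound you need.

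\emph{Trees lose a factor $N$.} A tree-graph reduction (Penrose or BBF) keeps only $k-1$ small bond factors $|f_e|\lesssim\frac\beta N(1+|G(x_e)|)e^{\frac\beta N|G(x_e)|}$ per polymer. So at best $|w(\gamma)|\lesssim k^{k-2}(C\beta/N)^{k-1}$. Then $\sum_{\gamma\ni1}|w(\gamma)|=O(1)$, and Koteck\'y--Preiss only yields $|\log Z_N|=O(N)$. That is the mean-field order, which is useless here.

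\emph{The cancellation is not available after the reduction.} The missing $N^{-1}$ has to come from $\int G=0$. But in Penrose's identity each tree $T$ carries the factor $\prod_{e\in P(T)\setminus T}(1+f_e)$; in BBF it carries interpolated Boltzmann weights. These depend on the positions of the leaves of $T$, so integrating out a leaf does not produce $\int f=I_N-1$. Leaf pruning makes sense only graph by graph, that is, in the full Mayer sum you wanted to avoid. Also, the 2-core of a spanning tree is empty, so the cycle you want to retain is exactly what the tree reduction throws away.

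\emph{Stability.} These identities are normally combined with pointwise stability, which fails for the attractive logarithmic singularity. Log-HLS and Moser--Trudinger give only integrated control.

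\textbf{The paper's way out} is to use the cancellation exactly, before taking any absolute value, and to keep a cycle instead of a tree.
\begin{itemize}
\item Normalize the bond as $K_N=I_N^{-1}e^{\frac\beta NG}-1$ with $I_N=\int_\T e^{\frac\beta NG}$. Then $\int K_N=0$ exactly, and $Z_N=I_N^{\binom N2}W_N$; the prefactor produces the $k=2$ term.
\item By translation invariance, every diagram with a bridge has zero amplitude. So $W_N$ is exactly a sum over disjoint unions of bridgeless diagrams in $\ipi(n)$, each having at least $n$ edges on $n$ vertices.
\item Contrary to your concern, the sum over all of them can be bounded in absolute value in this mean-field regime. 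Assign to each $\Gamma\in\ipi(n)$ a spanning unicyclic $U(\Gamma)\in\U(n)$ and resum the remaining edges: $\sum_{U(\Gamma)=U}\prod_{e\in\Gamma}|K_N(x_e)|\le\prod_{e\in U}|K_N(x_e)|\prod_{e\notin U}(1+|K_N(x_e)|)$.
\item Bound $1+|K_N|\le\exp(\frac\beta N|G|+C\frac{\beta^2}{N^2})$. Then apply H\"older with the integrated stability estimate $\int_{\T^n}\exp(\frac{q\beta}N\sum_{i<j\le n}|G(x_i-x_j)|)\,dx^n\le C^n$ (for $n\le N$ and $q\beta<8\pi$) and the unicyclic moment bound $\int_{\T^n}\prod_{e\in U}(1+|G(x_e)|)^p\,dx^n\le C_p^n$, which follows from $(1+|G|)^p\in L^2$.
\item With $|\U(n)|\le n^n/2$ this gives $\binom Nn\sum_{\Gamma\in\ipi(n)}|A_N(\Gamma)|\le Cn^{-1/2}(C\beta)^n$. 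This is your Step 2.
\end{itemize}
Koteck\'y--Preiss is then unnecessary. Dropping the disjointness constraint bounds the whole expansion by $\exp(C\sum_n n^{-1/2}\rho^n)$, and dominated convergence applies directly to $W_N$.
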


The second statement is a direct corollary of Vitali's theorem and the stated asymptotic, which is proved in \cref{sec:cluster} by means of a cluster expansion of the partition function $Z_N(\beta)$.
\cref{sec:conclusions} collects remarks on extensions and generalizations, the possibility of going beyond the perturbative approach to this result, and a comparison with the statistical mechanics of 2D Coulomb gas.

\section{Cluster expansion}\label{sec:cluster}

As customary, in the following $C$ denotes a positive constant possibly different in every occurrence, depending only on any subscripts. Define the \emph{Mayer function}
\begin{equation*}
    K_N(x)=I_N^{-1}\e^{\frac\beta{N}G(x)}-1,\qquad 
    I_N=\int_{\T}\e^{\frac\beta{N}G(x)}\,d x,
\end{equation*}
and observe that $K_N$ is zero-averaged, $\int_{\T}K_N(x)\,d x=0$, and
\begin{equation*}
    I_N=1+\frac{\beta^2}{2N^2}\int_{\T}G(x)^2\,d x+O(N^{-3}),
\end{equation*}
by Taylor expansion. Write the partition function in terms of Mayer functions,
\begin{equation*}
    Z_N(\beta)
    =I_N^{N(N-1)/2}W_N(\beta),\quad 
    W_N(\beta)
    =\int_{\T^N} \prod_{1\leq i<j\leq N}\bigl(1+K_N(x_i-x_j)\bigr) dx^N,
\end{equation*}
where
\begin{equation}\label{eq:I}
    \binom N2\log I_N=\frac{\beta^2}{4}\trace(G^{\circ 2})+O(N^{-1}).
\end{equation}
gives to leading order the $k=2$ term in \eqref{eq:fredholm}, because $\int_{\T}G(x)^2\,d x=\trace(G^{\circ 2})$.

Expanding the product in the definition of $W_N(\beta)$ leads to a sum of many terms, for which diagram notation is a convenient and natural bookkeeping device. For fixed $N$ I represent the integrands involving $k$ Mayer functions by a (possibly disconnected) graph $\Gamma$ with $N=v(\Gamma)$ vertices and $k=e(\Gamma)$ edges, call them \emph{cluster diagrams} and their integral the \emph{amplitude} $A_N(\Gamma)$ of the diagram $\Gamma$. The function $K_N$ acts as the propagator. The diagrams for $k=0,1$ are trivial, for $k=2$ there is only
\begin{equation*}
    \Vof{\gvee}
    =\int_{\T^3} K_N(x_1-x_2)K_N(x_2-x_3)dx_1dx_2dx_3=0.
\end{equation*}
Already from this simple example it is clear that diagrams with legs (vertices of degree 1) have null amplitude. More generally, any diagram with a connected component that can be disconnected by removing a single edge will have null amplitude, because one can first integrate over variables of one of the resulting connected components and reduce the starting amplitude to that of a diagram with a leg, by translation invariance.
The diagrams with four or less vertices whose amplitude is not null are:
\begin{equation*}
    \gtriangle,\qquad
    \gsquare, \quad \gbarsquare, \quad \gcompletesquare,
\end{equation*}
and the first disconnected diagram with nontrivial amplitude consists of two copies of the 3-cycle.

Let $\ipi(n)$ be the class of connected diagrams with $n$ vertices that cannot be disconnected by removing a single edge, that is \emph{1-particle irreducible} diagrams (hence the acronym) in the language of particle physics. In terms of graph theory, these are the connected simple graphs with vertex set $\set{1,\dots,n}$ that do not have bridges.
By the previous consideration,
\begin{equation}\label{eq:WNexpansion}
    W_N
    =
    \sum_{\ell\ge0}\frac1{\ell!}
    \sum_{\substack{n_1,\dots,n_\ell\ge2\\
    n_1+\cdots+n_\ell\leq N}}
    \frac{(N)_{n_1+\cdots+n_\ell}}{n_1!\cdots n_\ell!}
    \prod_{j=1}^{\ell}\sum_{\Gamma\in \ipi(n_j)}A_N(\Gamma),
\end{equation}
where the term $\ell=0$ is equal to one and $(a)_b$ is the decreasing factorial. The first order term in the asymptotic expansion of $W_N$ as $N\to \infty$ is entirely determined by cycle diagrams,
\begin{equation*}
    \Vof{\gncycle}=\trace(K_N^{\circ k}),
\end{equation*}
Indeed, there are $\frac12 (k-1)!$ possible cycles on $k$ vertices, and given $\beta_*<8\pi$ there exist constants $C_{\beta_*},N_{\beta_*}$ such that for $N\geq N_{\beta_*}$,
\begin{equation*}
    \norm{K_N-\frac\beta{N}G}_{\operatorname{Hilbert-Schmidt}}\leq \frac{C_{\beta_*}}{N^2},
\end{equation*}
(this follows from an elementary expansion as in the forthcoming \cref{lem:centered-edge-xspace}) therefore for each fixed $k\geq 3$ the total contribution of cycles to $W_N$ is
\begin{equation*}
    \binom Nk\frac{(k-1)!}{2}\trace(K_N^{\circ k})
    =
    \frac{\beta^k}{2k}\trace(G^{\circ k}) (1+o(1)),\quad N\to\infty,
\end{equation*}
exactly matching the statement of \cref{thm:main} (the prefactor gives the term $k=2$).

The contribution of all the other diagrams will be controlled by indexing them with unicyclic skeletons, that is I will estimate the amplitude of a diagram by considering a spanning subgraph containing a single cycle. Retaining a cycle is crucial, as considering instead spanning trees would not produce tight enough estimates. Let me introduce right away the symbol $\U(n)$ for the set of connected graphs on $n$ vertices that have a single cycle.

Before moving to the proof of \cref{thm:main}, let me recall a basic exponential estimate on partition functions. It can be directly derived from the arguments in any of the previous contributions \cite{cagliotiI,lionsbook,kiessling}.

\begin{lemma}\label{lem:mean-field-log-integrability}
For every $\beta_*<8\pi$ there exists $C_{\beta_*}<\infty$ such that, for all $0\leq \beta\leq \beta_*$ and $2\leq n\leq N$,
\begin{equation}\label{eq:mean-field-log-integrability}
    \int_{\T^n}\exp\pa{\frac{\beta}{N}\sum_{1\leq i<j\leq n}|G(x_i-x_j)|} dx^n
    \leq C_{\beta_*}^{n}.
\end{equation}
\end{lemma}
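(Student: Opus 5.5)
The plan is to reduce to a Jensen/convexity argument in the style of Caglioti--Lions--Marchioro--Pulvirenti and Kiessling, combined with the logarithmic Hardy--Littlewood--Sobolev (Moser--Trudinger) inequality on $\T$. First, control $|G|$ by $G$ plus a constant. On the torus, $G(x)=-\frac1{2\pi}\log|x|+g(x)$ with $g$ smooth near the origin, and $G$ is bounded below on $\T$. Hence $|G|\le G+2c_0$ with $c_0=\max(0,-\min G)$. The constant contributes at most $\exp\pa{\frac{\beta}{N}\binom n2 2c_0}\le \e^{\beta_* c_0 n}$, since $n\le N$. So it suffices to prove
\begin{equation*}
    \int_{\T^n}\exp\pa{\frac{\beta}{N}\sum_{i<j\le n}G(x_i-x_j)}dx^n\le C_{\beta_*}^n .
\end{equation*}
By monotonicity in $\beta$, valid after shifting $G$ to be nonnegative, which costs another $C^n$ by the same argument, we may take $\beta=\beta_*$. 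We may also replace $\beta/N$ by $\beta/n$ up to the same manipulation, because $\frac{\beta}{N}\le\frac{\beta}{n}$ and the shifted kernel is nonnegative. This reduces the claim to a bound on $Z_n(\beta_*)$ for the shifted kernel, uniformly with exponential dependence on $n$.

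Second, bound $Z_n(\beta)\le C^n$ for $\beta<8\pi$. Here I would use the classical Hölder trick. Write $\sum_{i<j}G(x_i-x_j)=\frac12\sum_i\sum_{j\ne i}G(x_i-x_j)$. By the generalized Hölder inequality over the $n$ index-$i$ factors, or equivalently by symmetrizing, we obtain
\begin{equation*}
    Z_n\le\int_{\T^{n}}\prod_{i}\exp\pa{\frac{\beta}{2n}\sum_{j\ne i}G(x_i-x_j)}dx^n .
\end{equation*}
A cleaner route is the one in \cite{kiessling}. Pair each variable against the others and use, for fixed $x_1$, the bound $\int_\T \e^{p\,\frac{\beta}{2}G(x_1-y)}\,dy<\infty$ for $p\beta/2<4\pi$, i.e. the local integrability of $|x|^{-p\beta/(4\pi)}$ requires $p\beta<8\pi$. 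Concretely, I would split the pair sum into $n-1$ perfect matchings (round-robin decomposition of $K_n$, adding a dummy vertex if $n$ is odd). Each matching $M_r$ contributes $\frac{\beta}{N}\sum_{\{i,j\}\in M_r}G(x_i-x_j)$. Hölder with $n-1$ equal exponents then gives
\begin{equation*}
    Z_n\le\prod_{r}\pa{\int\exp\pa{(n-1)\tfrac{\beta}{N}\sum_{M_r}G}}^{1/(n-1)}.
\end{equation*}
Within a matching the pairs are disjoint, so the integral factorizes:
\begin{equation*}
    \int\exp\pa{(n-1)\tfrac{\beta}{N}\sum_{M_r}G}=\pa{\int_\T \e^{\frac{(n-1)\beta}{N}G}}^{\lfloor n/2\rfloor}.
\end{equation*}
Since $\frac{(n-1)\beta}{N}<\beta_*<8\pi$, only the threshold $4\pi$ for integrability of $\e^{aG}$ is at risk: $\e^{aG}\sim|x|^{-a/2\pi}$ is integrable iff $a<4\pi$. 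This crude matching argument therefore only reaches $\beta_*<4\pi$.

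Third, and this is the main obstacle, reach the sharp range $\beta_*<8\pi$. Here the naive Hölder argument loses a factor $2$, and one needs the genuinely mean-field argument. I would follow \cite{cagliotiI}: apply Gibbs' variational principle. The bound $\log Z_n\le C n$ follows from
\begin{equation*}
    \frac{\beta}{N}\int H_n\,d\mu-\int\mu\log\mu\le Cn
\end{equation*}
for every symmetric probability density $\mu$ on $\T^n$. This in turn follows from the logarithmic HLS inequality on $\T$, namely $\frac{1}{2}\iint\rho(x)G(x-y)\rho(y)\le\frac{1}{8\pi}\int\rho\log\rho+C$, applied to one-particle marginals together with subadditivity of entropy. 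The delicate point is that the two-point marginal is not a product. The standard fix is the superadditivity estimate of \cite{cagliotiI}: one has $\frac1n\log Z_n(\beta)\le$ the mean-field free energy $+\,o(1)$ with explicit constants. Alternatively, use the Onofri/Moser--Trudinger inequality iteratively in one variable at a time, integrating out $x_n$ with the field $\frac{\beta}{N}\sum_{j<n}G(x_n-x_j)$ via Jensen. I would cite these works for this step, as the lemma statement suggests, and verify only that the constants are uniform in $\beta\le\beta_*$ and $n\le N$.
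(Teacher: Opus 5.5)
Your proposal is correct. For the sharp range it rests on the same literature as the paper, which gives no self-contained proof and only says that the bound can be derived from the arguments of \cite{cagliotiI,lionsbook,kiessling}.

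What you do differently is make the derivation explicit and shrink what must be borrowed to a bare statement. Since $|G|\le G+2c_0$ with $G+2c_0\ge0$, and $\beta/N\le\beta_*/n$, the left-hand side of \eqref{eq:mean-field-log-integrability} is at most $\e^{\beta_*c_0(n-1)}Z_n(\beta_*)$. So the absolute value and the uniformity in $\beta\le\beta_*$, $n\le N$ come for free. It then only remains to know that $\sup_n\frac1n\log Z_n(\beta_*)<\infty$. This follows from the free energy limit \eqref{eq:mfasymp}, already quoted in the paper, together with $Z_n(\beta_*)<\infty$ for each fixed $n$. You therefore do not need to reopen the cited proofs, as the paper's phrasing suggests.

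What your own arguments establish, however, is only $\beta_*<4\pi$. Besides the matching--H\"older bound, the one-variable-at-a-time integration you list as an alternative in the third step fails beyond $4\pi$ for the same reason. At configurations with $x_1=\dots=x_{n-1}$, the integrand $\exp\bigl(\frac\beta N\sum_{j<n}G(x_n-x_j)\bigr)$ is comparable to $|x_n-x_1|^{-\beta(n-1)/(2\pi N)}$ near $x_1$. This is not integrable once $\beta(n-1)/N\ge4\pi$, so no bound uniform in the remaining variables exists for $\beta_*>4\pi$ and $n$ comparable to $N$. Likewise, the one-marginal logarithmic HLS inequality cannot by itself control $\int H_n\,d\mu$, which depends on the two-point marginal.

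The range $4\pi\le\beta_*<8\pi$ is therefore covered only by the citation. That is the range where collective collapse of many vortices, and the constant $\frac1{8\pi}$ in the logarithmic HLS inequality, become relevant. This is exactly the situation in the paper.
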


I shall also make repeated use of some elementary estimates on propagators.

\begin{lemma}\label{lem:centered-edge-xspace}
For every $0\leq \beta_*<8\pi$ there exists $C_{\beta_*}<\infty$ such that, for all $0\leq \beta \leq \beta_*$, $N\ge2$, and $x\in\T$,
\begin{align}\notag
    |\log I_N|&\leq C_{\beta_*}\frac{\beta^2}{N^2}\\
    1+|K_N(x)|
    &\leq 
    \exp\left\{
        \frac{\beta}{N}|G(x)|
        +C_{\beta_*}\frac{\beta^2}{N^2}
    \right\},
    \label{eq:1+K_Nestimate}
    \\
    |K_N(x)|
    &\leq 
    \frac{C_{\beta_*}\beta}{N}
    \bigl(1+|G(x)|\bigr)
    \exp\left\{
        \frac{\beta}{N}|G(x)|
        +C_{\beta_*}\frac{\beta^2}{N^2}
    \right\}.
    \notag
\end{align}
\end{lemma}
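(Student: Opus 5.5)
The plan is to prove the three estimates of \cref{lem:centered-edge-xspace} in order, using only two facts about $G$: it is bounded from above on $\T$ (since $G(x)=-\frac1{2\pi}\log|x|+O(1)$ near $0$ and is smooth elsewhere), and $\e^{s|G|}\in L^1(\T)$ for $s<4\pi$. The second fact covers the relevant range, since $\frac{\beta}{N}\leq \frac{\beta_*}{2}<4\pi$ for $N\geq2$.

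\emph{Estimate on $\log I_N$.} Write $t=\beta/N\in[0,\beta_*/2]$. Since $\int_\T G\,dx=0$, Taylor's formula with remainder gives
\begin{equation*}
    I_N-1=\int_\T\bigl(\e^{tG}-1-tG\bigr)dx\leq \frac{t^2}{2}\int_\T G^2\e^{t|G|}\,dx.
\end{equation*}
The right-hand side is at most $C_{\beta_*}t^2$, because $\int G^2\e^{\frac{\beta_*}{2}|G|}dx<\infty$: near the singularity the integrand is $\log^2|x|\,|x|^{-\beta_*/(4\pi)}$, which is integrable since $\beta_*/(4\pi)<2$. Jensen's inequality gives $I_N\geq \e^{t\int G}=1$, so $0\leq\log I_N\leq I_N-1\leq C_{\beta_*}\beta^2/N^2$.

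\emph{Estimate on $1+|K_N|$.} Since $I_N\geq1$, we have $I_N^{-1}\e^{tG}>0$.
\begin{itemize}
\item If $K_N(x)\geq0$, then $1+|K_N|=I_N^{-1}\e^{tG}\leq \e^{t|G|}$.
\item If $K_N(x)<0$, then $1+|K_N|=2-I_N^{-1}\e^{tG}\leq 2$. This crude bound is not enough, so a finer estimate is needed. From $\e^{tG}\geq 1-t|G|$ we get $1-I_N^{-1}\e^{tG}\leq 1-I_N^{-1}+I_N^{-1}t|G|\leq \log I_N+t|G|$. Hence $1+|K_N|\leq 1+t|G|+C\beta^2/N^2\leq \exp\{t|G|+C\beta^2/N^2\}$.
\end{itemize}
In both cases \eqref{eq:1+K_Nestimate} follows.

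\emph{Estimate on $|K_N|$.} Decompose
\begin{equation*}
    K_N=I_N^{-1}\bigl(\e^{tG}-1\bigr)+\bigl(I_N^{-1}-1\bigr).
\end{equation*}
For the first term, $|\e^{s}-1|\leq |s|\e^{|s|}$ gives $|\e^{tG}-1|\leq t|G|\e^{t|G|}$, and $I_N^{-1}\leq1$. For the second term, $|I_N^{-1}-1|\leq \log I_N\leq C\beta^2/N^2\leq C_{\beta_*}\frac{\beta}{N}$, using $\beta/N\leq\beta_*$. Adding the two and inserting the harmless factor $\exp\{C\beta^2/N^2\}\geq1$ yields
\begin{equation*}
    |K_N(x)|\leq C_{\beta_*}\frac{\beta}{N}\bigl(1+|G(x)|\bigr)\exp\left\{\frac{\beta}{N}|G(x)|+C_{\beta_*}\frac{\beta^2}{N^2}\right\}.
\end{equation*}

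The only genuinely nontrivial step is the integrability bound on $\int G^2\e^{t|G|}dx$, which is where the threshold $\beta_*<8\pi$ enters, through $\beta_*/(4\pi)<2$. Everything else is elementary inequalities for the exponential.
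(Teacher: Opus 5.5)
Your proof is correct and follows essentially the paper's route. You bound $\log I_N$ by a Taylor expansion at $\beta/N=0$, using that $G^2\e^{t|G|}$ is integrable for $t\le\beta_*/2<4\pi$, and then handle $K_N$ with elementary exponential inequalities. The paper compresses your case split and additive decomposition into one rewriting, $K_N=\e^{u}-1$ with $u=\frac\beta N G-\log I_N$, followed by $1+|\e^u-1|\le\e^{|u|}$ and $|\e^u-1|\le|u|\e^{|u|}$; this also makes your Jensen bound $I_N\ge1$ unnecessary.

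One slip, which you never use: $G$ is bounded from below, not from above. It tends to $+\infty$ at the origin like $-\frac1{2\pi}\log|x|$, and that is exactly why the threshold $\beta_*<8\pi$ appears.
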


\begin{proof}
The first inequality follows from Taylor expansion in $\beta/N$, because the second derivative is uniformly integrable thanks to $\beta/N\leq \beta_*/2<4\pi$. For the other ones, write
\begin{equation*}
    K_N(x)=\e^{\frac\beta N G(x)-\log I_N}-1,
\end{equation*}
and apply $1+|\e^u-1|\leq \e^{|u|}$ and $|\e^u-1|\leq |u|\e^{|u|}$.
\end{proof}

The forthcoming Lemmas collect the relevant estimates on diagram amplitudes. 

\begin{lemma}\label{lem:unicyclic-log-moment}
For every $1\leq p<\infty$ there exists $C_p<\infty$ such that, for every $U\in\U(n)$,
\begin{equation}\label{eq:unicyclic-log-moment}
    \int_{\T^n}
    \prod_{(i,j)\in E(U)}
    \bigl(1+|G(x_i-x_j)|\bigr)^p
    dx^n
    \leq C_p^n.
\end{equation}
\end{lemma}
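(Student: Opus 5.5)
We integrate out the tree parts of $U$ leaf by leaf, and then handle the single remaining cycle by Hölder's inequality. Set $f(x)=(1+|G(x)|)^p$. Since $G$ has only a logarithmic singularity at the origin on $\T$, $f\in L^q(\T)$ for every $q<\infty$. Denote $c_q=\|f\|_{L^q(\T)}$; by translation invariance $\int_\T f(x-y)\,dx=c_1$ for every $y$.

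\emph{Pruning the trees.} A graph $U\in\U(n)$ consists of a cycle of length $k\geq 3$ with trees attached to its vertices. If $U$ is not a cycle, it has a leaf $i$ attached to a unique neighbour $j$. Only the factor $f(x_i-x_j)$ depends on $x_i$, so integrating in $x_i$ first produces $c_1$ times the integral for $U\setminus\{i\}\in\U(n-1)$. Iterating this $n-k$ times leaves the bound $c_1^{\,n-k}$ times the cycle integral
\begin{equation*}
    \int_{\T^k}\prod_{m=1}^{k} f(x_m-x_{m+1})\,dx^k=\trace(F^{\circ k}),\qquad x_{k+1}=x_1,
\end{equation*}
where $F$ is the convolution operator with the nonnegative kernel $f$.

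\emph{The cycle.} Since $f\geq0$, $\trace(F^{\circ k})=\sum_{\xi\in\mathbb Z^2}\hat f(\xi)^k$. To keep the bound elementary, fix $x_1$ and bound the remaining integrations along the path $x_1\to x_2\to\cdots\to x_k\to x_1$. One option is to apply Young's inequality repeatedly: $\|f\ast g\|_\infty\le\|f\|_{2}\|g\|_2$ and $\|f\ast g\|_2\le\|f\|_1\|g\|_2$ give
\begin{equation*}
    \trace(F^{\circ k})=(f^{\ast k})(0)\le \|f\|_2\,\|f^{\ast(k-1)}\|_2\le \|f\|_2^2\,\|f\|_1^{k-2}.
\end{equation*}
The whole integral is therefore bounded by $c_1^{n-2}c_2^2\le \max(c_1,c_2,1)^{n}$, which is \eqref{eq:unicyclic-log-moment} with $C_p=\max(c_1,c_2,1)$.

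\emph{Main obstacle.} The only delicate point is that the cycle closes up. Unlike a tree, its last edge cannot be integrated out against a free variable, which is why one $L^2$ bound (equivalently, a Hilbert–Schmidt norm) is needed instead of pure $L^1$ bounds. Here this costs nothing, because $\log$-singular kernels lie in every $L^q$. We still check finiteness of $c_2$ explicitly: near the origin $|G(x)|\le C(1+|\log|x||)$, and $\int_{|x|<1}(1+|\log|x||)^{2p}\,dx<\infty$ for every $p$. This holds uniformly in $U$, so the constant depends only on $p$.
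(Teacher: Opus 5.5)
Your proposal is correct and follows the paper's proof. You prune leaves at a cost of $\|f\|_{L^1}$ each and bound the remaining cycle by $\|f\|_{L^2}^2\|f\|_{L^1}^{k-2}$, which gives the same final bound $\|f\|_{L^1}^{n-2}\|f\|_{L^2}^2$; the only cosmetic difference is that you get the cycle estimate from Cauchy--Schwarz and Young's inequality in physical space, whereas the paper writes it as $|\trace(F^{\circ k})|\leq \trace(F^{\circ 2})\,\|F\|_{L^2\to L^2}^{k-2}$, with the Hilbert--Schmidt and operator norms bounded by $\|f\|_{L^2}$ and $\|f\|_{L^1}$.
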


\begin{proof}
Let $H_p=(1+|G|)^p\in L^2(\T)$.  Integrating out all legs of
$U$ contributes one factor $\|H_p\|_{L^1}$ per removed edge and reduces the
graph to its unique cycle, of some length $r\ge3$.  The remaining cycle integral is $\trace(H_p^{\circ r})$. As a convolution operator $H_p$ is Hilbert--Schmidt, so $H_p^{\circ 2}$ is trace class and
\begin{align*}
    \bigl|\trace(H_p^{\circ r})\bigr|
    \leq \trace(H_p^{\circ 2})
    \|H_p\|_{L^2\to L^2}^{r-2}
    \leq 
    \|H_p\|_{L^2}^2\|H_p\|_{L^1}^{r-2}.
\end{align*}
The complete integral is therefore bounded by
\begin{equation*}
\|H_p\|_{L^1}^{n-r}\|H_p\|_{L^2}^2\|H_p\|_{L^1}^{r-2}
    =
    \|H_p\|_{L^2}^2\|H_p\|_{L^1}^{n-2}
    \leq C_p^n.\qedhere
\end{equation*}
\end{proof}

\begin{lemma}\label{lem:uniform-combinatorics}
Fix $\beta_*<8\pi$.  There exists $C_{\beta_*}<\infty$ such that, for
all $\beta\leq \beta_*$, all $N\ge2$, and all $2\leq n\leq N$,
\begin{equation}\label{eq:connected-activity-xspace}
    B_{N,n}
    :=
    \binom Nn
    \sum_{\Gamma\in\ipi(n)}|A_N(\Gamma)|
    \leq 
    Cn^{-1/2}\rho^n,
    \qquad
    \rho:=C_{\beta_*}\beta.
\end{equation}
As a consequence, denoting by $\K_N$ the complete graph with $N$ vertices, if $\rho<1$,
\begin{equation}\label{eq:absolute-WN-bound}
    \sup_{N\ge2} \sum_{\Gamma\subseteq \K_N}|A_N(\Gamma)|
    \leq \exp\pa{C\sum_{n\ge2}n^{-1/2}\rho^n}<\infty.
\end{equation}
\end{lemma}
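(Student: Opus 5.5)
The plan is to bound $\sum_{\Gamma\in\ipi(n)}|A_N(\Gamma)|$ by expanding each $\Gamma$ along a unicyclic skeleton, and to bound the integrand using \cref{lem:centered-edge-xspace}, \cref{lem:mean-field-log-integrability} and \cref{lem:unicyclic-log-moment}. Every bridgeless connected graph on $n\ge3$ vertices contains a spanning unicyclic subgraph $U\in\U(n)$: take a spanning tree and add one non-tree edge, which exists because a tree has bridges. I would instead reorganize the sum. For fixed $U$, sum over all $\Gamma\supseteq U$, and attach to each $\Gamma$ a canonical skeleton $U(\Gamma)$, for instance via a fixed ordering of edges (lexicographically minimal spanning unicyclic subgraph), so that each $\Gamma$ is counted once. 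Then
\begin{equation*}
\sum_{\Gamma\in\ipi(n)}|A_N(\Gamma)|\le\sum_{U\in\U(n)}\int_{\T^n}\prod_{(i,j)\in E(U)}|K_N(x_i-x_j)|\prod_{(i,j)\notin E(U)}\bigl(1+|K_N(x_i-x_j)|\bigr)\,dx^n,
\end{equation*}
where the second product runs over all pairs outside $U$. Here I drop the constraint that $\Gamma$ be bridgeless and just sum over all extra edge sets, i.e.\ $\prod(1+|K_N|)$.

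Next, apply \cref{lem:centered-edge-xspace} to every factor. The $n$ edges of $U$ give $(C\beta/N)^n\prod_{E(U)}(1+|G|)$. All factors together give at most $\exp\{\frac\beta N\sum_{i<j\le n}|G(x_i-x_j)|+C\beta^2n^2/N^2\}$, and the last term is $\le C^n$ since $n\le N$. Hölder's inequality with exponents $p$ and $q$ then separates the two pieces. The unicyclic moment is bounded by $C_p^n$ via \cref{lem:unicyclic-log-moment}. The exponential factor with $q\beta$ in place of $\beta$ is bounded by $C^n$ via \cref{lem:mean-field-log-integrability}, provided $q\beta_*<8\pi$. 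This is possible after shrinking: fix $\beta_*'<8\pi$, work with $\beta_*\in(\beta_*',8\pi)$, and choose $q=\beta_*/\beta_*'$. In total each skeleton contributes at most $(C\beta/N)^n C^n$.

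It remains to count. Unicyclic graphs on $n$ labelled vertices number $|\U(n)|\sim\sqrt{\pi/8}\,n^{n-1/2}$; this is the known asymptotic, and one can also bound it directly by the number of cycles times rooted forests. Combining with $\binom Nn\le N^n/n!\le N^n e^n n^{-n-1/2}/\sqrt{2\pi}$, the factors $N^n$ cancel against $N^{-n}$, and we get $B_{N,n}\le C n^{-1}\cdot n^{\dots}$; more precisely $n^{n-1/2}\cdot n^{-n-1/2}=n^{-1}$, which is even better than the claimed $Cn^{-1/2}\rho^n$. Any polynomial prefactor suffices anyway. The case $n=2$ is trivial, since $\ipi(2)$ is empty and the only graph on two vertices is a single edge with zero amplitude.

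For \eqref{eq:absolute-WN-bound}, every $\Gamma\subseteq\K_N$ with nonzero amplitude factorizes into $\ipi$ components and isolated vertices. I would bound $\sum|A_N|$ by the analogue of \eqref{eq:WNexpansion} with absolute values, using $(N)_{\sum n_j}/\prod n_j!\le\prod_j\binom N{n_j}$. This is dominated by $\sum_\ell\frac1{\ell!}\bigl(\sum_n B_{N,n}\bigr)^\ell=\exp(\sum_nB_{N,n})$, and the exponent converges when $\rho<1$.

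The main obstacle is the first step: the overcounting from summing over skeletons $U$ must not destroy the bound. Using a canonical choice $U(\Gamma)$ together with the crude domination of the remaining edges by $\prod(1+|K_N|)$ avoids multiplicities. The price is that the remaining product runs over all pairs, and this is exactly what \cref{lem:mean-field-log-integrability} handles.
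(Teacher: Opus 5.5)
Your proposal is correct and follows essentially the same route as the paper. The shared steps are:
\begin{itemize}
\item a fixed skeleton $U(\Gamma)\in\U(n)$ for each $\Gamma\in\ipi(n)$;
\item domination of the remaining edges by $\prod_{e\notin E(U)}(1+|K_N(x_e)|)$;
\item \cref{lem:centered-edge-xspace}, then H\"older with $q\beta_*<8\pi$ together with \cref{lem:unicyclic-log-moment,lem:mean-field-log-integrability}, then Stirling;
\item the same exponentiation over families of disjoint vertex sets for \eqref{eq:absolute-WN-bound}.
\end{itemize}
The only difference is the count of unicyclic graphs. You use $|\U(n)|\le Cn^{n-1/2}$ (R\'enyi's asymptotic, or cycles times rooted forests), whereas the paper uses the cruder $|\U(n)|\le n^{n-2}\binom n2\le\frac12 n^n$. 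Your count improves the prefactor to $n^{-1}$, but nothing later in the paper needs that.
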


\begin{proof}
The case $n=2$ is empty.  
Assign to each $\Gamma\in \ipi(n)$ a spanning subgraph $U(\Gamma)\in \U(n)$ (it exists because $\Gamma$ has at least $n$ edges, one can take a spanning tree and add an edge).
Grouping diagrams $\Gamma$ according to their chosen skeleton, and denoting by $x_e$ the variable difference associated to an edge $e$,
\begin{multline*}
    \sum_{\Gamma \in \ipi(n)}\left|A_N(\Gamma)\right| 
    \leq \sum_{U \in \U(n)} \sum_{\substack{\Gamma \in \ipi(n) \\ U(\Gamma)=U}} \int_{\T^n} \prod_{e \in E(\Gamma)}\left|K_N\left(x_e\right)\right| d x^n\\
    \leq \sum_{U \in \U(n)} \int_{\T^n} \prod_{e \in E(U)}\left|K_N\left(x_e\right)\right| \prod_{e \notin E(U)}\left(1+\left|K_N\left(x_e\right)\right|\right) d x^n\\
    \leq \left(\frac{C_{\beta_*}\beta}{N}\right)^n \sum_{U \in \U(n)}
    \int_{\T^n}
    \prod_{e\in E(U)}(1+|G(x_e)|)
    \exp\pa{\frac{\beta}{N}\sum_{i<j}|G(x_i-x_j)|}dx^n,
\end{multline*}
the last inequality from \cref{lem:centered-edge-xspace}.
Now choose $q>1$ close to one so that $q\beta_*<8\pi$, put $p=q/(q-1)$ and apply H\"older inequality:
\begin{multline*}
    \int_{\T^n}
    \prod_{e\in E(U)}\bigl(1+|G(x_e)|\bigr)
    \exp\pa{\frac{\beta}{N}\sum_{i<j}|G(x_i-x_j)|}
    dx^n\\
    \leq \pa{\int_{\T^n}\prod_{e\in E(U)}\bigl(1+|G(x_e)|\bigr)^p dx^n}^{1/p}
    \pa{\int_{\T^n}\exp\pa{\frac{q\beta}{N}\sum_{i<j}|G(x_i-x_j)|}
    dx^n}^{1/q},
\end{multline*}
the right-hand side now is bounded by $C_{\beta_*}^n$ by \cref{lem:unicyclic-log-moment,lem:mean-field-log-integrability}. As for the sum over $U$, observe that a unicyclic graph can be generated by choosing a labeled spanning tree and
then one additional edge, so by Cayley's formula $|\U(n)|\leq n^{n-2}\binom n2 \leq \frac12 n^n$.
Together with Stirling's formula, this gives 
\begin{equation*}
B_{N,n}
    \leq 
    \frac{n^n}{2n!}
    \bigl(C_{\beta_*}\beta\bigr)^n
    \leq 
    Cn^{-1/2}\rho^n,
\end{equation*}
and the combination of the estimates obtained so far gives the first statement.

As for the second statement, observe first that if $\ipi(S)$ are the 1PI diagrams on the set of vertices $S$,
\begin{equation*}
    \sum_{\Gamma\subseteq \K_N}|A_N(\Gamma)|
    =\sum_{\F} \prod_{S\in\mathcal F}\sum_{\gamma\in\ipi(S)}|A_N(\gamma)|,
\end{equation*}
where $\sum_\F$ denotes the sum over families $\F$ of disjoint vertex subsets of $\K_N$.
Dropping the disjointness constraint and then allowing repeated sets yields
\begin{multline*}
    \sum_{\Gamma\subseteq \K_N}|A_N(\Gamma)|
    \leq 
    \sum_{\ell\ge0}\frac1{\ell!}
    \left(
        \sum_{S\subseteq \K_n}\sum_{\gamma\in\ipi(S)}|A_N(\gamma)|
    \right)^\ell\\
    =\exp\pa{\sum_{n=2}^N \binom Nn \sum_{\gamma\in\ipi(n)}|A_N(\gamma)|}
    \leq \exp\pa{C\sum_{n\ge2}n^{-1/2}\rho^n}.\qedhere
\end{multline*}
\end{proof}

\begin{lemma}\label{lem:fixed-graph-scaling}
For every finite simple graph $\Gamma$ with $n$ vertices and every $\beta_*<8\pi$,
there exists $C_{\Gamma,\beta_*}<\infty$ such that, for all sufficiently
large $N$ and $\beta\leq \beta_*$,
\begin{equation*}
    |A_N(\Gamma)| \leq C_{\Gamma,\beta_*}
    \left(\frac{\beta}{N}\right)^{e(\Gamma)}.
\end{equation*}
\end{lemma}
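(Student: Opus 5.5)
Since $\Gamma$ is a fixed graph, the constants may depend on it freely, and the proof needs only H\"older's inequality together with the pointwise bounds of \cref{lem:centered-edge-xspace}. I would bound each edge factor by the third estimate of \cref{lem:centered-edge-xspace}:
\begin{equation*}
    |A_N(\Gamma)|\leq \left(\frac{C_{\beta_*}\beta}{N}\right)^{e(\Gamma)}\e^{C_{\beta_*}e(\Gamma)\beta^2/N^2}\int_{\T^n}\prod_{e\in E(\Gamma)}\bigl(1+|G(x_e)|\bigr)\exp\pa{\frac{\beta}{N}\sum_{e\in E(\Gamma)}|G(x_e)|}dx^n .
\end{equation*}
The prefactor $\e^{C e(\Gamma)\beta_*^2/N^2}$ is bounded by a constant depending on $\Gamma$. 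It therefore suffices to show that the remaining integral is bounded uniformly in $N$ large and $\beta\leq\beta_*$. For $\beta\le 0$ the exponential factor can be dropped after replacing $\beta$ by $|\beta|$, and the same argument applies.

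I would then split the integral by H\"older's inequality with exponents $p,q$. Since $\Gamma$ is fixed, the exponential weight is harmless for $N$ large. We have $\frac{q\beta}{N}\sum_{e}|G(x_e)|\leq \sum_e \frac{q\beta_*}{N}|G(x_e)|$, and a generalized H\"older inequality over the $e(\Gamma)$ edge factors reduces this to single-edge integrals $\int_{\T^n}\e^{\frac{q\beta_* e(\Gamma)}{N}|G(x_e)|}dx^n=\int_\T \e^{\frac{q\beta_*e(\Gamma)}{N}|G(y)|}dy$. These are finite and uniformly bounded once $N> q\beta_* e(\Gamma)/(4\pi)$, because $|G(y)|\le \frac1{2\pi}|\log|y||+C$ near the origin. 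Alternatively one could invoke \cref{lem:mean-field-log-integrability} with $n$ vertices, after enlarging the edge set to all pairs. However, that lemma bounds $\beta/N$ weighted sums only for $\beta<8\pi$, and the direct route is simpler.

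For the polynomial factor, $\int_{\T^n}\prod_{e}(1+|G(x_e)|)^p dx^n$, I would again apply generalized H\"older over the edges, with exponent $e(\Gamma)$ on each factor. This bounds the integral by $\prod_e\|1+|G|\|_{L^{p e(\Gamma)}(\T)}^{p}$, which is finite since $G$ has only a logarithmic singularity and so lies in every $L^r(\T)$. Translation invariance makes each single-edge integral over $\T^n$ equal to an integral over $\T$. Collecting the bounds gives $|A_N(\Gamma)|\le C_{\Gamma,\beta_*}(\beta/N)^{e(\Gamma)}$, with the constant $C_{\beta_*}^{e(\Gamma)}$ absorbed.

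There is no real obstacle. The only point needing care is that the integrability threshold for the exponential weight must hold uniformly, and this is exactly why the statement is restricted to sufficiently large $N$ depending on $\Gamma$. If one wanted the bound for all $N$ with $\beta<8\pi$, one would instead rely on \cref{lem:mean-field-log-integrability} after dominating the sum over $E(\Gamma)$ by the sum over all pairs.
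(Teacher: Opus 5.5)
Your proof is correct. It follows the paper through the first step (edgewise application of \cref{lem:centered-edge-xspace}) but bounds the remaining integral $J_{N,\beta}$ differently.

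The paper absorbs the polynomial weights into the exponential via $1+t\leq C_\epsilon\e^{\epsilon t}$. It then dominates $\sum_{e\in E(\Gamma)}|G(x_e)|$ by the sum over all pairs of the $n$ vertices. Finally it applies \cref{lem:mean-field-log-integrability} with particle number $n$ and inverse temperature $\lambda n<8\pi$, so that the weight per pair is $\lambda\geq\epsilon+\beta/N$ once $N$ is large. This is exactly the alternative you mention and set aside.

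You instead factorize everything into single-edge integrals by the generalized H\"older inequality on the probability space $\T^n$. This only requires $G\in L^r(\T)$ for all $r<\infty$ and $\e^{a|G|}\in L^1(\T)$ for $a<4\pi$. The threshold on $N$ then depends on $e(\Gamma)$ rather than $n$, which is harmless for a fixed graph.

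Your route is more elementary and self-contained, and it can be shortened further. The preliminary $p,q$ split is unnecessary: generalized H\"older applied directly to the edge factors gives $J_{N,\beta}\leq\|(1+|G|)\e^{\frac\beta N|G|}\|_{L^{e(\Gamma)}(\T)}^{e(\Gamma)}$. The paper's route has the merit of reusing the one nontrivial input already needed in \cref{lem:uniform-combinatorics}. There, bounds must be geometric in $n$, and edgewise H\"older would be far too lossy; for a single fixed graph this concern does not arise.

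One caveat: your aside on $\beta\leq0$ is not justified as written. \cref{lem:centered-edge-xspace} is stated only for $0\leq\beta\leq\beta_*$, and uniformity over all $\beta\leq\beta_*$ would need a lower bound on $\beta$. This is immaterial, since the lemma is only used for $0<\beta<\beta_0$.
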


\begin{proof}
Applying \cref{lem:centered-edge-xspace} to every edge gives
\begin{gather*}
    |A_N(\Gamma)|
    \leq 
    \left(\frac{C_{\beta_*}\beta}{N}e^{C_{\beta_*}\frac{\beta^2}{N^2}}\right)^{e(\Gamma)}
    J_{N,\beta},\\
    J_{N,\beta}
    :=
    \int_{\T^n}
    \prod_{e\in E(\Gamma)}
        \bigl(1+|G(x_e)|\bigr)
    \exp\left\{
        \frac{\beta}{N}
        \sum_{e\in E(\Gamma)}|G(x_e)|
    \right\}
    dx^n.
\end{gather*}
For every $\varepsilon>0$, the elementary inequality
$1+t\leq C_\epsilon e^{\epsilon t}$, $t\ge0$, yields
\begin{equation*}
    J_{N,\beta}\leq C_{\Gamma,\epsilon}\int_{(\T)^n}\exp\pa{\left(\epsilon+\frac{\beta}{N}\right) \sum_{a\in E(\Gamma)}|G(x_a)|}dx^n.
\end{equation*}
Choose $\lambda>0$ so that $\lambda n<8\pi$, and then choose
$0<\varepsilon<\lambda/2$.  Since $\beta\leq \beta_*$, there is
$N_0=N_0(\Gamma,\beta_*)$ such that, for every $N\ge N_0$, $\varepsilon+\frac{\beta}{N}\leq \lambda$. Moreover, $E(\Gamma)\subseteq E(\K_n)$. Hence
\begin{equation*}
    J_{N,\beta} \leq C_{\Gamma,\varepsilon} \int_{\T^n} \exp\pa{\lambda\sum_{1\leq i<j\leq n}|G(x_i-x_j)|} dx^n.
\end{equation*}
Apply \cref{lem:mean-field-log-integrability} with particle number $n=N$ and inverse temperature $\widetilde\beta=\lambda n<8\pi$.  Since $\widetilde\beta/n=\lambda$, the last integral is finite and depends only on $\Gamma$ and $\lambda$, therefore $J_{N,\beta}$ is uniformly bounded in $N\ge N_0$, $\beta\leq \beta_*$ by a finite constant. The factor $\exp(C_{\beta_*}e(\Gamma)\beta^2/N^2)$ may be absorbed into the final constant.
\end{proof}

Recall that 
\begin{equation*}
    W_N=
    \sum_{\ell\ge0}\frac1{\ell!}
    \sum_{\substack{n_1,\dots,n_\ell\ge2\\
    n_1+\cdots+n_\ell\leq N}}
    \frac{(N)_{n_1+\cdots+n_\ell}}{n_1!\cdots n_\ell!}
    \prod_{j=1}^{\ell}\theta_{N,n_j},
    \qquad 
    \theta_{N,n}:=\sum_{\Gamma\in\ipi(n)}A_N(\Gamma).
\end{equation*}
The above estimates now allow to pass to the limit this expression.

\begin{proof}[Proof of \cref{thm:main}]
    For $\beta_*<8\pi$ let $C_{\beta_*}$ as in \cref{lem:uniform-combinatorics} and take $\beta_0=\min(\beta_*,(2C_{\beta_*})^{-1})$ so that for $\beta<\beta_0$, $\rho=C_{\beta_*}\beta<2$.
    
    For fixed $n$, every $\ipi(n)$ diagram has at least $n$ edges.  If $e(\Gamma)\ge n+1$, by \cref{lem:fixed-graph-scaling},
    \begin{equation*}
    N^n|A_N(\Gamma)|
    \leq 
    C_{\Gamma,\beta_*}\beta^{e(\Gamma)}
    N^{n-e(\Gamma)}=o(1),\qquad N\to \infty.
    \end{equation*}
    If $e(\Gamma)=n$, $\Gamma$ is a cycle. There are $(n-1)!/2$ unoriented cycles with $n$ vertices, and each has amplitude $\trace(K_N^{\circ n})$.  Since $\ipi(2)=\varnothing$, for every fixed $n\ge2$,
    \begin{equation}\label{eq:theta-pointwise-limit}
    c_n=\lim_{N\to \infty }N^n\theta_{N,n}=
    \begin{cases}
        0, & n=2,\\[2mm]
        \displaystyle
        \tfrac{1}{2}(n-1)!\beta^n\trace(G^{\circ n}), & n\ge3.
    \end{cases}
    \end{equation}
    It remains to justify passage of the limit through \eqref{eq:WNexpansion}.  For a fixed $\ell$ and fixed $n_1,\dots,n_\ell$, let $v:=n_1+\cdots+n_\ell$. Using $(N)_v\leq \prod_{j=1}^{\ell}(N)_{n_j}$, the triangle inequality, and \cref{lem:uniform-combinatorics} letting $b_n=Cn^{-1/2}\rho^n$,
    \begin{equation*}
    \abs{\frac1{\ell!}
    \one_{\{v\leq N\}}
    \frac{(N)_v}{n_1!\cdots n_\ell!}
    \prod_{j=1}^{\ell}\theta_{N,n_j}}
    \leq 
    \frac1{\ell!}
    \prod_{j=1}^{\ell}
    \pa{\binom N{n_j}
        \sum_{\Gamma\in\ipi(n_j)}|A_N(\Gamma)|}
    \leq 
    \frac1{\ell!}\prod_{j=1}^{\ell}b_{n_j},
    \end{equation*}
    where right-hand side is summable,
    \begin{equation*}
    \sum_{\ell\ge0}\frac1{\ell!}
    \sum_{n_1,\dots,n_\ell\ge2}
    \prod_{j=1}^{\ell}b_{n_j}
    =\exp\pa{\sum_{n\ge2}b_n}<\infty.
    \end{equation*}
    On the other hand, \eqref{eq:theta-pointwise-limit} implies, for each fixed $\ell$-uple,
    \begin{equation*}
    \lim_{N\to \infty }\one_{\{v\leq N\}}
    \frac{(N)_v}{n_1!\cdots n_\ell!}
    \prod_{j=1}^{\ell}\theta_{N,n_j}
    =
    \prod_{j=1}^{\ell}\frac{c_{n_j}}{n_j!}.
    \end{equation*}
    Dominated convergence in \eqref{eq:WNexpansion} therefore yields
    \begin{align*}
    \lim_{N\to \infty} W_N(\beta)
    =
    \sum_{\ell\ge0}\frac1{\ell!}
    \sum_{n_1,\dots,n_\ell\ge2}
    \prod_{j=1}^{\ell}\frac{c_{n_j}}{n_j!}
    =
    \exp\pa{\sum_{n\ge2}\frac{c_n}{n!}}
    =\exp\pa{\sum_{n\ge3}
        \frac{\beta^n}{2n}\trace(G^{\circ n})}.
    \end{align*}
    Recalling \eqref{eq:I}, the prefactor in $Z_N=I_N^{N(N-1)/2}W_N$ completes the series at the exponent and therefore the proof.
\end{proof}

\section{Further Considerations}\label{sec:conclusions}

Let me first observe that \cref{thm:main} is easily adapted to neutral two-species point vortex ensembles. Let $N=2m$ and denote by $y_1,\dots,y_m$ and $z_1,\dots,z_m$ the positions of vortices with circulations $+1$ and $-1$, respectively. Set
\begin{gather*}
    H_m(y)=\sum_{1\leq i<j\leq m}G(y_i-y_j),\qquad
    C_m(y,z)=\sum_{i,j=1}^mG(y_i-z_j),\\
    H_N^\pm(y,z)=H_m(y)+H_m(z)-C_m(y,z),
\end{gather*}
and define
\begin{equation*}
    Z_N^\pm(\beta)
    =\int_{\T^N}\exp\pa{\frac\beta N H_N^\pm(y,z)}dy^m dz^m,
    \qquad
    \omega_N^\pm
    =\frac1{\sqrt N}\pa{\sum_{i=1}^m\delta_{y_i}-\sum_{j=1}^m\delta_{z_j}}.
\end{equation*}
The associated mean field variational problem is
\begin{multline*}
    \sup
    \left\{
    \frac\beta8\int_\T(\rho_+-\rho_-)
        G\ast(\rho_+-\rho_-)\,dx
    -\frac12\int_\T\pa{\rho_+\log\rho_++\rho_-\log\rho_-}\,dx
    \right.\\
    \left| \rho_\pm\geq0,\, \int_\T\rho_\pm dx=1 \right\},
\end{multline*}
and the Euler-Lagrange equations form a coupled pair of mean field equations,
\begin{equation*}
    \rho_\pm=\frac{e^{\pm \beta\phi/2}}{\int e^{\pm \beta\phi/2} dx},
    \qquad -\Delta\phi=\rho_+-\rho_-.
\end{equation*}
The flat pair $(\rho_+,\rho_-)=(1,1)$ is always a critical point, but it is the unique maximizer only up to a certain threshold $0\leq \beta\leq \beta_\star$ for which only an estimate $\beta_\star\leq 2\pi^2$ is available, \cite[\S 5.3]{lionsbook}. In any case, \cref{thm:main} implies an analogue perturbative result and the needed uniform control of the two-species partition function follows directly from the one-species estimates: by H\"older's inequality for conjugate exponents $p,q>1$,
\begin{multline}\label{eq:two-species-partition-bound}
    Z_N^\pm(\beta)
    \leq
    \left[\int_{\T^N}
    \exp\pa{\frac{2p\beta}{N}\bigl(H_m(y)+H_m(z)\bigr)}dy^m dz^m
    \right]^{1/p}\\
    \times
    \left[\int_{\T^N}
    \exp\pa{-\frac{q\beta}{N}(H_m(y)+H_m(z)+C_m(y,z))}dy^m dz^m
    \right]^{1/q}\\
    =Z_m(p\beta)^{2/p}Z_N(-q\beta)^{1/q}.
\end{multline}
For every $\beta<\beta_0$ one can choose $p>1$ so close to one that $p\beta<\beta_0$, then \cref{thm:main} bounds the first factor, while the uniform positive-temperature estimate of \cite{grottoromito} bounds the second one. Hence
\begin{equation}\label{eq:two-species-ui}
    \sup_{N=2m\geq 2}Z_N^\pm(\beta)<\infty,
    \qquad 0<\beta<\beta_0.
\end{equation}

\begin{proposition}\label{prop:two-species}
For every $0<\beta<\beta_0$,
\begin{equation*}
    \lim_{N\to\infty}Z_N^\pm(\beta)
    =\dettwo(I-\beta G)^{-1/2}.
\end{equation*}
The law of $\omega_N^\pm$ under the canonical ensemble distribution of positions with partition function $Z_N^\pm(\beta)$, $0<\beta<\min(\beta_0,\beta_\star)$, weakly converges to $\mu_\beta$.
\end{proposition}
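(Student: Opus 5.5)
The plan is to run the cluster expansion of \cref{sec:cluster} again with colored vertices, and to check that every estimate depends on the Mayer functions only through $|G|$. Label the $N=2m$ vertices with signs $\sigma_i=\pm1$, so that $\frac\beta N H_N^\pm=\frac\beta N\sum_{i<j}\sigma_i\sigma_jG(x_i-x_j)$. For $\sigma=\pm1$ define
\begin{equation*}
    K_N^{\sigma}(x)=(I_N^{\sigma})^{-1}\e^{\sigma\frac\beta N G(x)}-1,\qquad I_N^{\sigma}=\int_\T \e^{\sigma\frac\beta N G(x)}\,dx .
\end{equation*}
Both functions are zero-averaged. This gives
\begin{equation*}
    Z_N^\pm=(I_N^{+})^{m(m-1)}(I_N^{-})^{m^2}\,W_N^\pm,\qquad
    W_N^\pm=\int_{\T^N}\prod_{i<j}\bigl(1+K_N^{\sigma_i\sigma_j}(x_i-x_j)\bigr)\,dy^m dz^m .
\end{equation*}
Taylor expansion gives $\log I_N^{\pm}=\frac{\beta^2}{2N^2}\trace(G^{\circ2})+O(N^{-3})$. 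The total number of pairs is $m(m-1)+m^2=\binom N2$, so the prefactor again tends to $\exp(\frac{\beta^2}4\trace(G^{\circ2}))$, exactly as in \eqref{eq:I}.

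Next I would check that \cref{lem:centered-edge-xspace,lem:fixed-graph-scaling,lem:uniform-combinatorics} hold verbatim for $K_N^{-}$, and for diagrams whose edges carry either propagator. The proof of \cref{lem:centered-edge-xspace} only uses $|\pm\frac\beta NG-\log I_N^\pm|\le\frac\beta N|G|+C\beta^2/N^2$, which is symmetric in the sign. The downstream estimates use only \eqref{eq:1+K_Nestimate}, \cref{lem:mean-field-log-integrability} (already stated with $|G|$) and \cref{lem:unicyclic-log-moment}. The vanishing of diagrams with bridges uses only zero average and translation invariance, so it also still holds. Hence \eqref{eq:WNexpansion} holds with $\theta_{N,n}$ replaced by a sum over $1$PI diagrams on vertex subsets with arbitrary colorings. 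The colorings are fixed by which labels are chosen among the $y$'s and $z$'s, so the factor $(N)_v/n_1!\cdots n_\ell!$ is replaced by a sum over colored subsets of total count at most $\binom{N}{n_j}$ per block. The bound $B_{N,n}\le Cn^{-1/2}\rho^n$ then goes through with the same $\rho$ and $\beta_0$.

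For the pointwise limits, diagrams with more edges than vertices are still $o(N^{-n})$ by \cref{lem:fixed-graph-scaling}. For a cycle on vertices $i_1,\dots,i_k$, the amplitude is $\trace(K^{\sigma_1\sigma_2}_N\circ\cdots\circ K^{\sigma_k\sigma_1}_N)$. Since $K_N^\sigma=\sigma\frac\beta NG+O_{HS}(N^{-2})$ and $\prod_j\sigma_{i_j}\sigma_{i_{j+1}}=1$ (each sign appears twice), this amplitude equals $(\beta/N)^k\trace(G^{\circ k})(1+o(1))$ independently of the coloring. Summing over all $\binom Nk$ vertex subsets, with colors forced, gives the same $c_k$ as in \eqref{eq:theta-pointwise-limit}. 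Dominated convergence, exactly as in the proof of \cref{thm:main}, then yields the limit $\dettwo(I-\beta G)^{-1/2}$. This argument also reproves \eqref{eq:two-species-ui} directly.

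For the law, under the product uniform measure $\omega_N^\pm$ converges to white noise: the covariance is $\frac1N(m+m)\int\phi\psi=\int\phi\psi$ after centering. The U-statistic limit theorem of \cite{dynkin} applies to the two-sample statistic $H_N^\pm/N$, and gives joint convergence of $(\omega_N^\pm,H^\pm_N/N)$ to $(\xi,:E(\xi):)$. Then $e^{\frac\beta NH_N^\pm}$ is uniformly integrable by \eqref{eq:two-species-ui} applied at some $\beta'\in(\beta,\beta_0)$. Vitali's theorem together with the convergence of normalizations gives $\mu_\beta$. I expect the main obstacle to be this joint convergence for the two-species U-statistic, and in particular the identification of its limit with the single Wick product $:E:$ via the signed empirical measure. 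I would handle it by writing $H_N^\pm=\frac12\bigl(\langle \tilde\omega,G\ast\tilde\omega\rangle-\sum_i G(0)\bigr)$ formally, with Fourier truncation as in \cite{grottoromito}. The restriction $\beta<\beta_\star$ enters only to ensure the flat profile is the relevant equilibrium; the cluster expansion itself does not need it.
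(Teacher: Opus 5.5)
Your proposal is correct. For the limit of $Z_N^\pm$ it takes a genuinely different route from the paper; for the law statement it coincides with the paper's argument.

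The paper does not reopen the cluster expansion. It writes $\frac\beta N H_N^\pm=\frac{2\beta}{N}\bigl(H_m(y)+H_m(z)\bigr)-\frac\beta N H_N(y,z)$ and applies H\"older, which gives \eqref{eq:two-species-partition-bound}:
\[
Z_N^\pm(\beta)\le Z_m(p\beta)^{2/p}Z_N(-q\beta)^{1/q}.
\]
The first factor is controlled by \cref{thm:main} and the second by the positive-temperature bound of \cite{grottoromito}. This uniform integrability, combined with the joint convergence $(\omega_N^\pm,H_N^\pm/N)\to(\xi,\frac12\wick{\brak{\xi,G\ast\xi}})$ and Vitali, yields both the limit of $Z_N^\pm$ and the convergence of the Gibbs laws at once.

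You instead recompute the limit with signed Mayer functions $K_N^{\sigma_i\sigma_j}$. This is sound for the following reasons:
\begin{itemize}
\item every bound in \cref{lem:centered-edge-xspace,lem:uniform-combinatorics,lem:fixed-graph-scaling} involves only $|G|$;
\item bridge cancellation needs only zero average and translation invariance;
\item $K_N^\sigma=\sigma\frac\beta N G+O(N^{-2})$ in Hilbert--Schmidt norm, and the sign product around any cycle is $1$;
\item $\sum_{a+b=n}\binom ma\binom mb=\binom Nn$ supplies the domination.
\end{itemize}
For fixed $n$ there are only finitely many colorings, so the $O(N^{-n-1})$ errors are uniform in the coloring. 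Hence indexing by block sizes as in the paper, or by $(a_j,b_j)$, both work.

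What your route buys is self-containedness. The partition-function asymptotics need neither the two-sample U-statistic limit theorem nor the external positive-temperature estimate, and \eqref{eq:two-species-ui} comes out directly with the same constants and the same $\beta_0$.

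What the paper's route buys is brevity and modularity. Any extension of the one-species bound \eqref{eq:ui} would transfer verbatim to two species without redoing the combinatorics. It also uses the joint CLT/U-statistic ingredient for both claims, and you need that ingredient anyway for the law statement.
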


\begin{proof}
Under the product Lebesgue measure, the two empirical processes associated with the $y$ and $z$ variables are independent and each has asymptotic covariance one half of that of white noise, so
\begin{equation*}
    (\omega_N^\pm, H_N^\pm/N) \xrightarrow[law]{N\to\infty}(\xi,\wick{\brak{\xi,G\ast\xi}}/2)
\end{equation*}
by the same central limit and degenerate U-statistic arguments invoked in the one-species case. Fix $\beta<\beta'<\beta_0$ and choose $p>1$ such that $p\beta'<\beta_0$. Applying \eqref{eq:two-species-partition-bound} at $\beta'$ yields \eqref{eq:two-species-ui} with $\beta'$ in place of $\beta$. Thus the random variables $\exp\pa{\frac\beta N H_N^\pm}$ are uniformly integrable, since they are bounded in $L^{\beta'/\beta}$. Vitali's theorem now gives
\begin{equation*}
    \lim_{N\to \infty}Z_N^\pm(\beta)
    =E\exp\pa{\frac\beta2:\brak{\xi,G\ast\xi}:}
    =\dettwo(I-\beta G)^{-1/2}.
\end{equation*}
The same argument, after multiplication by a bounded continuous functional of $\omega_N^\pm$, proves convergence of the Gibbs measures to $\mu_\beta$.
\end{proof}

Finally, some comments on related literature. Recognizing that $H_N$ is a degenerate U-statistic in principle opens the way to that well-developed theory. However, even the sharpest large deviation results for U-statistics of this kind are not sufficient to establish the uniform bound \eqref{eq:ui}, as it is revealed by a close inspection of the arguments in \cite{ginelatalazinn00,ginelatalazinn01,major}. The issue is clearly the singularity of the interaction $G$, which needs to be regularized for applying those results, shifting the problem to an essentially equivalent uniform control on the removed singular part. As a result on a U-statistic, \cref{thm:main} must be regarded as an exception depending on the particular structure of the interaction kernel.

The point vortex statistical mechanics is closely related to that of 2D Coulomb gas, because the point vortex Hamiltonian (the kinetic energy of vortices) is the potential energy of point charges (whose velocities have Maxwell distribution). 
However, the finite-volume negative temperature ensembles considered in this note are motivated by a specific fluid dynamical interest, and negative temperatures do not appear in 2D Coulomb gas theory, for which I refer to \cite{serfaty15,serfaty17,serfaty18,serfaty21,serfaty23,serfaty26}.

In conclusion, it is natural to wonder whether the perturbative argument of this note can be improved or overcome, reaching the full integrability range $\beta<8\pi$. 
It is worth mentioning that the Gaussian measure $\mu_\beta$ is well-defined for a larger interval $\beta<4\pi^2$ than the integrability one $\beta<8\pi$, and this may already suggest that regularizing the vortex interaction at scale $\epsilon=\epsilon(N)$ and taking a joint limit (as in \cite{bpp}) could be possible even up to $\beta<4\pi^2$. 
In any case, it appears that establishing \eqref{eq:ui} up to $8\pi$ requires a substantial improvement that goes beyond perturbative approaches.

\begin{acknowledgements}
    I wish to thank Eliseo Luongo and Marco Romito for insightful conversations on the topic, and the former for his feedback on a preliminary version of this work.
\end{acknowledgements}

\bibliographystyle{plain}

\begin{thebibliography}{10}

\bibitem{abraham2017}
Eitan Abraham and Oliver Penrose.
\newblock Physics of negative absolute temperatures.
\newblock {\em Physical Review E}, 95(1):012125, 2017.

\bibitem{albeveriocruzeiro}
Sergio Albeverio and Ana-Bela Cruzeiro.
\newblock Global flows with invariant ({Gibbs}) measures for {Euler} and {Navier}-{Stokes} two dimensional fluids.
\newblock {\em Commun. Math. Phys.}, 129(3):431--444, 1990.

\bibitem{albeverioribeiro}
Sergio Albeverio and Raphael H{\o}egh-Krohn.
\newblock Stochastic flows with stationary distribution for two-dimensional inviscid fluids.
\newblock {\em Stochastic Processes Appl.}, 31(1):1--31, 1989.

\bibitem{serfaty21}
Scott Armstrong and Sylvia Serfaty.
\newblock Local laws and rigidity for {Coulomb} gases at any temperature.
\newblock {\em Ann. Probab.}, 49(1):46--121, 2021.

\bibitem{bpp}
G.~Benfatto, P.~Picco, and M.~Pulvirenti.
\newblock On the invariant measures for the two-dimensional {Euler} flow.
\newblock {\em J. Stat. Phys.}, 46(3-4):729--742, 1987.

\bibitem{cagliotiI}
E.~Caglioti, P.~L. Lions, C.~Marchioro, and M.~Pulvirenti.
\newblock A special class of stationary flows for two-dimensional {Euler} equations: {A} statistical mechanics description.
\newblock {\em Commun. Math. Phys.}, 143(3):501--525, 1992.

\bibitem{cagliotiII}
E.~Caglioti, P.~L. Lions, C.~Marchioro, and M.~Pulvirenti.
\newblock A special class of stationary flows for two-dimensional {Euler} equations: {A} statistical mechanics description. {II}.
\newblock {\em Commun. Math. Phys.}, 174(2):229--260, 1995.

\bibitem{dynkin}
E.~B. Dynkin and A.~Mandelbaum.
\newblock Symmetric statistics, {Poisson} point processes, and multiple {Wiener} integrals.
\newblock {\em Ann. Stat.}, 11:739--745, 1983.

\bibitem{eyinkspohn}
G.~L. Eyink and H.~Spohn.
\newblock Negative-temperature states and large-scale, long-lived vortices in two-dimensional turbulence.
\newblock {\em J. Stat. Phys.}, 70(3-4):833--886, 1993.

\bibitem{ginelatalazinn01}
Evarist Gin{\'e}, Stanislaw Kwapie{\'n}, Rafa{\l} Lata{\l}a, and Joel Zinn.
\newblock The {LIL} for canonical {{\(U\)}}-statistics of order 2.
\newblock {\em Ann. Probab.}, 29(1):520--557, 2001.

\bibitem{ginelatalazinn00}
Evarist Gin{\'e}, Rafa{\l} Lata{\l}a, and Joel Zinn.
\newblock Exponential and moment inequalities for {{\(U\)}}-statistics.
\newblock In {\em High dimensional probability II. 2nd international conference, Univ. of Washington, DC, USA, August 1--6, 1999}, pages 13--38. Boston, MA: Birkh{\"a}user, 2000.

\bibitem{grottoluongoromito}
Francesco Grotto, Eliseo Luongo, and Marco Romito.
\newblock Gibbs equilibrium fluctuations of point vortex dynamics.
\newblock {\em Ann. Appl. Probab.}, 34(6):5426--5461, 2024.

\bibitem{grottoromito}
Francesco Grotto and Marco Romito.
\newblock A central limit theorem for {Gibbsian} invariant measures of {2D} {Euler} equations.
\newblock {\em Commun. Math. Phys.}, 376(3):2197--2228, 2020.

\bibitem{grottoromitosd}
Francesco Grotto and Marco Romito.
\newblock Decay of correlation rate in the mean field limit of point vortices ensembles.
\newblock {\em Stoch. Dyn.}, 20(6):16, 2020.
\newblock Id/No 2040009.

\bibitem{gugugu}
Guangze Gu, Changfeng Gui, Yeyao Hu, and Qinfeng Li.
\newblock Uniqueness and symmetry for the mean field equation on arbitrary flat tori.
\newblock {\em Int. Math. Res. Not.}, 2021(24):18812--18827, 2021.

\bibitem{gumora}
Changfeng Gui and Amir Moradifam.
\newblock Symmetry of solutions of a mean field equation on flat tori.
\newblock {\em Int. Math. Res. Not.}, 2019(3):799--809, 2019.

\bibitem{kiessling}
Michael K.-H. Kiessling.
\newblock Statistical mechanics of classical particles with logarithmic interactions.
\newblock {\em Commun. Pure Appl. Math.}, 46(1):27--56, 1993.

\bibitem{kiesslinglebowitz}
Michael K.-H. Kiessling and Joel~L. Lebowitz.
\newblock The micro-canonical point vortex ensemble: {Beyond} equivalence.
\newblock {\em Lett. Math. Phys.}, 42(1):43--58, 1997.

\bibitem{serfaty18}
Thomas Lebl{\'e} and Sylvia Serfaty.
\newblock Fluctuations of two dimensional {Coulomb} gases.
\newblock {\em Geom. Funct. Anal.}, 28(2):443--508, 2018.

\bibitem{serfaty17}
Thomas Lebl{\'e}, Sylvia Serfaty, and Ofer Zeitouni.
\newblock Large deviations for the two-dimensional two-component plasma.
\newblock {\em Commun. Math. Phys.}, 350(1):301--360, 2017.

\bibitem{lionsbook}
Pierre-Louis Lions.
\newblock {\em On {Euler} equations and statistical physics}.
\newblock Pisa: Scuola Normale Superiore, Classe di Scienze, 1998.

\bibitem{major}
P{\'e}ter Major.
\newblock On a multivariate version of {Bernstein}'s inequality.
\newblock {\em Electron. J. Probab.}, 12:966--988, 2007.

\bibitem{onsager1949}
Lars Onsager.
\newblock Statistical hydrodynamics.
\newblock {\em Il Nuovo Cimento (1943-1954)}, 6(Suppl 2):279--287, 1949.

\bibitem{purcell1951}
Edward~M Purcell and Robert~V Pound.
\newblock A nuclear spin system at negative temperature.
\newblock {\em Physical Review}, 81(2):279, 1951.

\bibitem{serfaty26}
Matthew Rosenzweig and Sylvia Serfaty.
\newblock Sharp commutator estimates of all order for {Coulomb} and {Riesz} modulated energies.
\newblock {\em Commun. Pure Appl. Math.}, 79(2):207--292, 2026.

\bibitem{serfaty15}
Etienne Sandier and Sylvia Serfaty.
\newblock {2D} {Coulomb} gases and the renormalized energy.
\newblock {\em Ann. Probab.}, 43(4):2026--2083, 2015.

\bibitem{serfaty23}
Sylvia Serfaty.
\newblock Gaussian fluctuations and free energy expansion for {Coulomb} gases at any temperature.
\newblock {\em Ann. Inst. Henri Poincar{\'e}, Probab. Stat.}, 59(2):1074--1142, 2023.

\end{thebibliography}

\end{document}